\documentclass[10pt,reqno,oneside]{amsart}

\usepackage{lscape}
\usepackage{amsfonts}
\usepackage{pdfsync}
\usepackage{hyperref}
\usepackage{color}
\usepackage{amsmath}
\usepackage[colorinlistoftodos]{todonotes} 

\newcommand*\Eval[3]{\left.#1\right\rvert_{#2}^{#3}}

\newtheorem*{theorem-non}{Theorem}
\newtheorem*{lemma-non}{Lemma}
\newtheorem{theorem}{Theorem}[section]
\newtheorem{lemma}[theorem]{Lemma}
\newtheorem{corollary}[theorem]{Corollary}

\newtheorem{example}[theorem]{Examples}


\theoremstyle{definition}
\newtheorem{definition}[theorem]{Definition}
\newtheorem{remark}[theorem]{Remark}
\newtheorem{theorem*}[theorem]{Theorem}

\def\O{\Omega}

\def\C{\mathcal{C}}
\def\d{{\rm d}}

\def\div{{\rm div}\,}
\def\supp{{\rm supp}}
\def\dx{{\,\rm d}x}

\def\u{{\bf u}}
\def\v{{\bf v}}

\newcommand{\R}{{\mathbb R}}
\newcommand{\N}{{\mathbb N}}

\DeclareMathOperator*{\esssup}{ess\,sup}
\DeclareMathOperator*{\essinf}{ess\,inf}

\begin{document}
\section*{}
\title[An application of the Hardy inequality]{An application of the weighted discrete Hardy inequality}

\author[Bui]{Giao Bui}
\author[L\'opez-Garc\'ia]{Fernando L\'opez-Garc\'ia}
\author[Tran]{Van Tran}
\address{Department of Mathematics and Statistics\\ California State Polytechnic University Pomona, 
3801 West Temple Avenue, Pomona, CA (91768), US} 
\email{gqbui@cpp.edu}
\email{fal@cpp.edu}
\email{vttran@cpp.edu \& van.tran@umconnect.umt.edu}

\thanks{Partially Supported by NSF-DMS 1247679 grant PUMP: Preparing Undergraduates  through Mentoring towards PhDs}

\keywords{Discrete Hardy inequality, Divergence operator, Decomposition of functions, Cusps, Weights, Stokes equations}

\subjclass[2010]{Primary: 26D15; Secondary: 46E35,76D07}

\begin{abstract}
In a note published in 1925, G. H. Hardy stated the inequality 
\begin{equation*}
\sum_{n=1}^\infty \left(\frac{1}{n}\sum_{k=1}^n a_k \right)^p \leq \left(\frac{p}{p-1}\right)^p \sum_{n=1}^\infty a_n^p,
\end{equation*}
for any non-negative sequence $\{a_n\}_{n \geq 1}$, and $p>1$. This inequality is known in the literature as the classical discrete Hardy inequality. It has been widely studied and several applications and new versions have been shown. 

In this work, we use a characterization of a weighted version of this inequality to exhibit a sufficient condition for the existence of solutions of the differential equation $\div \u=f$ in weighted Sobolev spaces over a certain plane irregular domain. The solvability of this equation is fundamental for the analysis of the Stokes equations. 

The proof follows from a local-to-global argument based on a certain decomposition of functions which is also of interest for its applications to other inequalities or related results in Sobolev spaces, such as the Korn inequality.
\end{abstract}

\maketitle

\section{Introduction}\label{Intro}

Given $p>1$, {\it the discrete Hardy inequality} states 

\begin{equation}\label{Hardy}
\sum_{n=1}^\infty \left(\frac{1}{n}\sum_{k=1}^n a_k \right)^p \leq \left(\frac{p}{p-1}\right)^p \sum_{n=1}^\infty a_n^p,
\end{equation}
for any non-negative sequence $\{a_n\}_{n\geq 1}$, where the constant in the inequality $(p/(p-1))^p$ is optimal. This inequality has been widely studied and many generalizations have been shown. In this article, we use a weighted version known as {\it the weighted discrete Hardy inequality} which says: 
\begin{equation}\label{wHardy}
\left(\sum_{n=1}^\infty u_n\left(\sum_{k=1}^n a_k \right)^p \right)^{1/p} \leq C \left( \sum_{n=1}^\infty v_n a_n^p \right)^{1/p}.
\end{equation} 
The existence of a constant $C$, that makes inequality \eqref{wHardy} valid for any non-negative sequence $\{a_n\}_{n\geq 1}$, depends only on $p$ and the sequence weights $\{u_n\}_{n\geq 1}$ and $\{v_n\}_{n\geq 1}$. There are several characterizations of the sequence weights in the previous inequality such as the one published in \cite{AH} that states that the constant $C$ in \eqref{wHardy} exists if and only if 
\begin{equation*}
A=\sup_{k\geq 1} \left(\sum_{i=k}^\infty u_i\right)^{1/p} \left(\sum_{i=1}^k v_i^{1-q} \right)^{1/q} <\infty,
\end{equation*}
where $q=p/(p-1)$. See also \cite{B,O} for more information about this type of inequalities. The existence of a characterization for the sequence weights in \eqref{wHardy} is key to prove our main result on the solvability of the divergence equation in weighted Sobolev spaces. We deal with the existence of weighted Sobolev solutions of the equation $\div\u=f$ for weights $\nu_1(x), \nu_2(x):\Omega\to \R_{>0}$, where $\Omega$  is the planar domain
\begin{equation}\label{domain}
\Omega:=\{(x_1,x_2)\in\R^2\, : \, 0<x_1<1 \text{ and } 0<x_2<x_1^\gamma\},
\end{equation} 
\begin{figure}[h]
\begin{tikzpicture}[xscale=4,yscale=3]
\draw[thick, fill=gray!50!white] plot[smooth,samples=100,domain=0:1] (\x,{(\x)^2}) node[above right]{\large $x_2=x_1^\gamma$} -- 
    plot[smooth,samples=100,domain=1:0] (\x,{0});
\draw[<->] (-.2,0)--(1.5,0) node[right]{$x_1$};
\draw[<->] (0,-0.2)--(0,1.2) node[above]{$x_2$};
\foreach \x in {1}
    \draw (\x,2pt)--(\x,-2pt) node[below] {$\x$};
\foreach \y/\ytext in {1}
    \draw (2pt,\y)--(-2pt,\y) node[left] {$\y$};    
\node at (.75,.25) {\Large $\Omega$};
\end{tikzpicture}
\label{Omega}
\end{figure}
for $\gamma\geq 1$. Specifically, we are looking for sufficient conditions on the weights $\nu_1(x)$ and $\nu_2(x)$ such that, for any $f\in L^2(\Omega,\nu_2(x))$ with vanishing mean value, there exists a solution $\u$ of $\div\u=f$ in the Sobolev spaces $H^1_0(\Omega,\nu_1(x))^2:=\overline{C_0^\infty(\Omega)^2}$ with the following estimate 
\begin{equation}\label{westimate}
\int_\Omega |D\u(x)|^2 \nu_1(x) \dx \leq C^2\int_{\Omega}|f(x)|^2\nu_2(x) \dx,
\end{equation}
where $D\u(x)$ denotes the differential matrix of $\u$. The weights considered here satisfy that $\nu_1(x)=x_1^{2(\gamma-1)}\nu_2(x)$ and $\nu_1,\nu_2$ depend only on the first component of $x$ (i.e. $\nu_1(x)=\nu_1(x_1)$ and  $\nu_2(x)=\nu_2(x_1)$).  Notice that if $\gamma>1$, the domain $\Omega$ has a singularity (cusp) at the origin, while the domain is regular (convex) if $\gamma=1$. The factor $x_1^{2(\gamma-1)}$ in the definition of $\nu_1(x)$ is there to deal with the singularity at the origin and disappears when $\Omega$ is regular ($\gamma=1$), in which case we have the same weights in both sides of the estimate \eqref{westimate}. The exponent in the factor $x_1^{2(\gamma-1)}$ is optimal in the following sense: if $\nu_2(x)=1$ and $\nu_1(x)=x_1^{a}$, with $a<2(\gamma-1)$, the solvability of $\div\u=f$ with estimate \eqref{westimate} fails in general (we refer to \cite{ADL} for counterexamples). 

The solvability of the divergence equation is fundamental for the variational analysis of the Stokes equations and strongly depends on the geometry of the domain, which has been studied in Lipschitz domains, star-shaped domains with respect to a ball, John domains, H\"older-$\alpha$ domains, among others. We refer to \cite{AD} and references therein for an extensive description of the solvability of this equation on domains under several geometric conditions. The domain $\Omega$ of our interest and defined in \eqref{domain} was already considered in \cite{DL,L1}. The authors in \cite{DL} use the Piola transform of an explicit solution on a regular domain whose analysis required the use of the theory of singular integral operators and Muckenhoupt weights. In \cite{L1}, the author uses a technique similar to the one treated in this article, where the discrete weighted Hardy inequality \eqref{wHardy} is replaced by a Hardy-type operator on weighted $L^p(\Omega)$ spaces. The reason to work with \eqref{wHardy} instead of the Hardy-type operator defined in \cite{L1} relies on the simplicity of the discrete inequality and the characterization of the weights for which the inequality remains valid.

Now, in order to prove our main results, we decompose $\Omega$ into a collection of infinitely many regular (star-shaped with respect to a ball) subdomains $\{\Omega_i\}_{i\geq 0}$ where the weights can be assumed to be constant. In that case the solvability of the divergence equation has been proved. Then, we extend by zero the solutions in $\Omega_i$ to the whole domain and add them up to obtain a solution in $\Omega$. Inequality \eqref{wHardy} appears when we estimate the norm of the ``global solution" in terms of the estimation of the ``local solutions". The decomposition $\{\Omega_i\}_{i\geq 0}$ of $\Omega$ mentioned above is:
\begin{equation}\label{Partition}
\Omega_i:=\{(x_1,x_2)\in\Omega\, :\, 2^{-(i+2)}<x_1<2^{-i}\}.
\end{equation}

This is the main result of the paper.
\begin{theorem}\label{Main}
Let $\omega:\Omega\to\R$ be an admissible weight in the sense of Definition \ref{admissible weights}, for $p=2$, such that the following weighted Hardy inequality is valid for any non-negative sequence $\{d_n\}_{n\geq 1}$: 
\begin{equation*}
\sum_{j=1}^\infty u_i \left(\sum_{i=1}^j d_i \right)^2\leq C_H^2\sum_{j=1}^\infty u_j d_j^2,
\end{equation*}
where 
\[u_i:=|\Omega_i|\,\omega^2(2^{-i}).\] 
Then, there exists a constant $C$ such that for any $f$ in $L^2(\Omega,\omega^{-2}(x_1))$, with vanishing mean value, there exists a solution $\u:\Omega\subset\R^2\to \R^2$ of the equation $\div\u=f$ in $H^1_0(\Omega, x_1^{2(\gamma-1)}\omega^{-2}(x_1))^2$  such that 
\begin{equation*}
\int_\Omega |D\u(x)|^2 x_1^{2(\gamma-1)}\omega^{-2}(x_1) \dx \leq C^2\int_{\Omega}|f(x)|^2\omega^{-2}(x_1) \dx.
\end{equation*}

Moreover, 
\begin{equation*}
C^2\leq \gamma^2 2^{12+4\gamma} C_\omega^8 C_H^2.
\end{equation*}
\end{theorem}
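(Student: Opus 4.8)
The plan is to make the local-to-global strategy of the Introduction quantitative, with the weighted discrete Hardy inequality governing the transfer of mass between the pieces $\Omega_i$ of the decomposition \eqref{Partition}. I begin by recording the geometry of the covering: consecutive subdomains overlap in $\Omega_i\cap\Omega_{i+1}=\{2^{-(i+2)}<x_1<2^{-(i+1)}\}\cap\Omega$, the covering has bounded overlap (each point of $\Omega$ lies in at most two of the $\Omega_i$), and each $\Omega_i$ is star-shaped with respect to a ball, with $|\Omega_i|\simeq 2^{-i(\gamma+1)}$ and aspect ratio $\diam(\Omega_i)$ over inradius of order $2^{i(\gamma-1)}$. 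Because $\omega$ is admissible in the sense of Definition \ref{admissible weights}, both weights are frozen, up to a factor controlled by $C_\omega$, to constants on each $\Omega_i$: for $x\in\Omega_i$ one has $\omega^{-2}(x_1)\simeq\omega^{-2}(2^{-i})$ and $x_1^{2(\gamma-1)}\omega^{-2}(x_1)\simeq 2^{-2i(\gamma-1)}\omega^{-2}(2^{-i})$. In particular the sequence weight of the statement is $u_i=|\Omega_i|\,\omega^2(2^{-i})\simeq\int_{\Omega_i}\omega^2$.

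Next I construct the decomposition $f=\sum_{i\ge 0}f_i$ into pieces of vanishing mean subordinate to the covering. Fix a partition of unity $\{\psi_i\}$ with $\supp\psi_i\subset\Omega_i$ and bump functions $\eta_i$ supported in the overlap $\Omega_i\cap\Omega_{i+1}$ with $\int_\Omega\eta_i=1$. Writing $m_j=\int_\Omega f\psi_j$ and the tail sums $S_i=\sum_{j\ge i}m_j$, so that $S_0=\int_\Omega f=0$ by the vanishing-mean hypothesis, I set $f_i=f\psi_i+S_{i+1}\eta_i-S_i\eta_{i-1}$. One checks directly that $\supp f_i\subset\Omega_i$, that $\int_\Omega f_i=m_i+S_{i+1}-S_i=0$, and that the bridge terms telescope so that $\sum_i f_i=f$. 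Since each $\Omega_i$ is regular and $f_i$ has vanishing mean, the classical solvability of the divergence equation on star-shaped domains produces $\u_i\in H^1_0(\Omega_i)^2$ with $\div\u_i=f_i$ and $\|D\u_i\|_{L^2(\Omega_i)}\le C_i\|f_i\|_{L^2(\Omega_i)}$, the constant $C_i$ growing like the aspect ratio, $C_i\simeq 2^{i(\gamma-1)}$. Extending each $\u_i$ by zero and putting $\u=\sum_i\u_i$ gives a solution of $\div\u=f$ in $\Omega$ that lies in the desired weighted space.

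The crux is the weighted estimate. Freezing the weights on $\Omega_i$ turns the local bound into $\int_{\Omega_i}|D\u_i|^2\,x_1^{2(\gamma-1)}\omega^{-2}\,\dx\le c\,2^{-2i(\gamma-1)}C_i^2\int_{\Omega_i}|f_i|^2\omega^{-2}\,\dx$, where $c$ depends only on $C_\omega$; here the factor $x_1^{2(\gamma-1)}$ is exactly what cancels $C_i^2\simeq 2^{2i(\gamma-1)}$, leaving a constant uniform in $i$. Summing over $i$ and using the bounded overlap reduces everything to estimating $\sum_i\int_{\Omega_i}|f_i|^2\omega^{-2}\,\dx$. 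The principal term $\sum_i\|f\psi_i\|_{L^2(\Omega_i,\omega^{-2})}^2$ is at most a constant times $\|f\|_{L^2(\Omega,\omega^{-2})}^2$, again by bounded overlap. For the bridge terms, $\int_\Omega\eta_i^2\,\omega^{-2}\simeq\omega^{-2}(2^{-i})/|\Omega_i|=u_i^{-1}$, while Cauchy--Schwarz gives $|m_j|\le\int_{\Omega_j}|f|\le\big(\int_{\Omega_j}|f|^2\omega^{-2}\big)^{1/2}\big(\int_{\Omega_j}\omega^2\big)^{1/2}\le\|f\|_{L^2(\Omega_j,\omega^{-2})}\,u_j^{1/2}$. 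Setting $c_j=\|f\|_{L^2(\Omega_j,\omega^{-2})}\,u_j^{1/2}$, the bridge contribution is therefore at most a constant times $\sum_i u_i^{-1}\big(\sum_{j\ge i}c_j\big)^2$. This is precisely the adjoint of the hypothesized Hardy inequality: since the operators $a\mapsto(\sum_{i\le n}a_i)_n$ and $c\mapsto(\sum_{j\ge n}c_j)_n$ are mutually adjoint on $\ell^2$, the assumed bound $\sum_j u_j(\sum_{i\le j}d_i)^2\le C_H^2\sum_j u_j d_j^2$ is equivalent, with the same constant $C_H$, to $\sum_i u_i^{-1}(\sum_{j\ge i}c_j)^2\le C_H^2\sum_i u_i^{-1}c_i^2$. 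Because $\sum_i u_i^{-1}c_i^2=\sum_i\|f\|_{L^2(\Omega_i,\omega^{-2})}^2$ is controlled by $\|f\|_{L^2(\Omega,\omega^{-2})}^2$, the bridge terms are bounded by $C_H^2$ times $\|f\|_{L^2(\Omega,\omega^{-2})}^2$, which closes the estimate.

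I expect the difficulty to be in the bookkeeping rather than in any single deep idea. Two points deserve care. First, the bridge functions $\eta_i$ must be chosen so that their weighted $L^2$ masses are exactly the $u_i^{-1}$ that pair with the Hardy sequence weight $u_i=|\Omega_i|\omega^2(2^{-i})$; it is the Cauchy--Schwarz estimate $|m_j|\le\|f\|_{L^2(\Omega_j,\omega^{-2})}u_j^{1/2}$ that fixes this pairing and explains why $u_i$, rather than $u_i^{-1}$, is the natural weight in the statement, the tail-sum operator appearing through adjointness with the same constant $C_H$. Second, obtaining the explicit bound $C^2\le\gamma^2 2^{12+4\gamma}C_\omega^8 C_H^2$ requires tracking every comparability constant: the admissibility constant $C_\omega$ entering each time a weight is frozen, the $\gamma$-dependent constants from $|\Omega_i|$ and from the derivatives of $\psi_i,\eta_i$, the geometric constant in $C_i$, and the overlap multiplicity. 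None of these is conceptually hard, but their accumulation is what produces the high powers of $C_\omega$ and $2$ in the final constant.
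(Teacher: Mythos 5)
Your proposal follows essentially the same route as the paper: the same dyadic covering $\{\Omega_i\}$, the same telescoping mean-zero decomposition (your $S_i\eta_{i-1}$ are exactly the paper's correction terms $h_i=\chi_{B_i}|B_i|^{-1}\int_{W_i}\sum_{k\geq i}f_k$), the same passage to the dual tail-sum Hardy inequality with weight $u_i^{-1}$ via adjointness (the paper's Lemma \ref{Equiv}), and the same local solvability on star-shaped pieces with aspect-ratio constant $\simeq 2^{i(\gamma-1)}$ cancelling the factor $x_1^{2(\gamma-1)}$. The only substantive item you assert rather than prove is that each $\Omega_i$ is star-shaped with respect to a ball of radius $\simeq 2^{-\gamma i}$, which the paper verifies explicitly and which is needed to make the Costabel--Dauge constant quantitative.
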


\begin{remark} The strong connection between the solvability of the equation $\div\u=f$ and the validity of the Korn inequality in the second case is well-known (see \cite{HP,JK,AD}).  Thus, it is worth observing that in \cite{AO} the authors use the weighted discrete Hardy inequality \eqref{wHardy} to prove the validity of the Korn inequality on domains with a single singularity on the boundary by using a different local-to-global argument. 
\end{remark}

The following result considers the case where the weights are power functions.

\begin{corollary}[Power weights]\label{Corollary 1}
Let $\O\subset\R^2$ be the domain defined in \eqref{domain} and $\beta>\frac{-\gamma-1}{2}$.  Then, there exists a positive constant $C$ such that for any $f\in L^2(\O,\omega(x_1)^{-2})$, with $\int_\O f=0$, there exists a solution $\u\in H^1_0\left(\O,x_1^{2(\gamma-1)}\omega(x_1)^{-2})\right)^2$ of ${\rm div}\,\u=f$ that satisfies
\begin{equation}\label{Ineq Cor 1}
\int_\Omega |D\u(x)|^2 x_1^{2(\gamma-1)}\omega(x_1)^{-2} \dx \leq C^2\int_{\Omega}|f(x)|^2 \omega(x_1)^{-2} \dx,
\end{equation}
where $\omega(x_1):=x_1^{\beta} $.
Moreover, if $\beta\leq 0$, the constant $C$ in \eqref{Ineq Cor 1} satisfies the following estimate:
\begin{equation*}
C\leq \dfrac{M}{1-2^{-2\left(\beta +\frac{\gamma+1}{2}\right)}},
\end{equation*}
where the constant $M$ is independent of $\beta$.
\end{corollary}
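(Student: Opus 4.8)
The plan is to derive Corollary~\ref{Corollary 1} as a direct application of Theorem~\ref{Main} with the specific power weight $\omega(x_1)=x_1^\beta$. The only real work is to verify the two hypotheses of the theorem for this weight: first, that $\omega$ is admissible in the sense of Definition~\ref{admissible weights} (with $p=2$); and second, that the associated weighted discrete Hardy inequality holds for the sequence $u_i=|\Omega_i|\,\omega^2(2^{-i})$. Once both are established, Theorem~\ref{Main} yields a solution with estimate \eqref{Ineq Cor 1} immediately, and the quantitative bound on $C$ will come from tracking the constants $C_\omega$ and $C_H$ through the conclusion $C^2\le \gamma^2 2^{12+4\gamma}C_\omega^8 C_H^2$.

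The main computational step is to make the Hardy inequality hypothesis explicit by computing $u_i$. Since $\Omega_i=\{(x_1,x_2)\in\Omega:2^{-(i+2)}<x_1<2^{-i}\}$ and $x_2$ ranges over $(0,x_1^\gamma)$, the area $|\Omega_i|$ is $\int_{2^{-(i+2)}}^{2^{-i}} x_1^\gamma\,dx_1$, which is comparable to $2^{-i(\gamma+1)}$ up to a constant depending only on $\gamma$. Combining this with $\omega^2(2^{-i})=2^{-2i\beta}$ gives $u_i\approx 2^{-i(\gamma+1+2\beta)}$, a geometric sequence. The plan is then to invoke the Ariño--Muckenhoupt-type characterization quoted in the introduction: for the self-weighted Hardy inequality with $p=2$ and $q=2$, finiteness of the constant reduces to checking $\sup_k\bigl(\sum_{i\ge k}u_i\bigr)^{1/2}\bigl(\sum_{i\le k}u_i^{-1}\bigr)^{1/2}<\infty$. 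For a geometric sequence $u_i=r^i$ both the tail sum $\sum_{i\ge k}u_i$ and the partial sum of reciprocals $\sum_{i\le k}u_i^{-1}$ are again geometric, and the product telescopes to a quantity bounded uniformly in $k$ precisely when the common ratio satisfies the right sign condition. This is where the hypothesis $\beta>\frac{-\gamma-1}{2}$ enters: it is exactly the condition $\gamma+1+2\beta>0$ ensuring $r=2^{-(\gamma+1+2\beta)}<1$, so that the tail sums converge and the supremum is finite. I would then read off an explicit value of $C_H^2$ from summing the two geometric series, obtaining a factor of the form $(1-2^{-(\gamma+1+2\beta)})^{-1}$.

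The expected main obstacle is bookkeeping rather than conceptual: extracting the sharp dependence of the constant $M$ on $\beta$ in the regime $\beta\le 0$, and confirming that $M$ can be chosen independent of $\beta$. The denominator $1-2^{-2(\beta+\frac{\gamma+1}{2})}=1-2^{-(\gamma+1+2\beta)}$ is exactly the geometric-series factor identified above, so the structure of the bound is forced by the Hardy constant computation; the remaining prefactors $\gamma^2 2^{12+4\gamma}C_\omega^8$ from Theorem~\ref{Main} must be absorbed into $M$ after checking that the admissibility constant $C_\omega$ can be bounded uniformly over $\beta\le 0$. I would verify admissibility by checking the defining inequalities in Definition~\ref{admissible weights} directly for $x_1^\beta$, using that on each $\Omega_i$ the weight $x_1^\beta$ oscillates by at most a fixed multiplicative constant (since $x_1$ ranges over a dyadic interval $[2^{-(i+2)},2^{-i}]$, on which $x_1^\beta$ varies by a factor of $2^{2|\beta|}$), and noting that for $\beta\le 0$ this doubling constant stays bounded. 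Assembling these pieces and simplifying the constant then gives the stated estimate.
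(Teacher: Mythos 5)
Your proposal is correct and follows essentially the same route as the paper: it verifies that $\omega(x_1)=x_1^\beta$ is admissible and reduces the Hardy hypothesis of Theorem~\ref{Main} to the Andersen--Heinig condition $\sup_k\bigl(\sum_{i\geq k}u_i\bigr)^{1/2}\bigl(\sum_{i\leq k}u_i^{-1}\bigr)^{1/2}<\infty$ for the geometric sequence $u_i=|\Omega_i|2^{-2i\beta}\approx r^i$ with $r=2^{-(\gamma+1+2\beta)}$, which is exactly the content of the paper's Lemma~\ref{Selena Van 1}, yielding $C_H\lesssim(1-r)^{-1}$ and hence the stated bound. The only detail worth adding is the other half of Definition~\ref{admissible weights}, namely $\omega^2\in L^1(\Omega)$, which holds under the same condition $\beta>\frac{-\gamma-1}{2}$.
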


Notice that the distance from $(x_1,x_2)$ in $\Omega$ to the origin is comparable to $x_1$, thus the weights here can be understood as powers of the distance to the origin or the cusp if $\gamma>1$. Indeed, 
\[x_1\leq \sqrt{x_1^2+x_2^2}\leq \sqrt{2}x_1.\]
for all $(x_1,x_2)\in\Omega$. 

The existence of a solution of the divergence equation in this planar domain $\Omega$ with the estimate \eqref{Ineq Cor 1} was first obtained in \cite[Theorem 4.1]{DL} for $\beta$ in $\left(\frac{-\gamma-1}{2},\frac{3\gamma-1}{2}\right)$, and later in \cite[Theorem 5.1]{L1} for $\beta\geq 0$. In this case, we recover both results as a corollary of our main theorem. In addition, an estimate of the constant that bounds its blow-up as $\beta$ tends to $\frac{-\gamma-1}{2}$ is exhibited. 
Finally, notice that if $\beta \leq \frac{-\gamma-1}{2}$ then $L^2(\O,x_1^{-2\beta})\not\subset L^1(\Omega)$ and the vanishing mean value condition in the divergence problem is not well-defined. Hence, the condition $\beta>\frac{-\gamma-1}{2}$ is optimal for the current setting. For an example of a non-integrable function in $L^2(\O,x_1^{-2\beta})$, when $\beta\leq \frac{-\gamma-1}{2}$, one can consider $f(x)=(1-\ln(x_1))^{-1} x_1^{-\gamma-1}$.

The following result considers the case where the weights are powers of a logarithmic function. 

\begin{corollary}[Powers of logarithmic weights]\label{Corollary 2}
Let $\O\subset\R^2$ be the domain defined in \eqref{domain} and $\alpha\in\R$.  Then, there exists a positive constant $C$ such that for any $f\in L^2(\O,\omega(x_1)^{-2})$, with $\int_\O f=0$, there exists a solution $\u\in H^1_0\left(\O,x_1^{2(\gamma-1)}\omega(x_1)^{-2})\right)^2$ of ${\rm div}\,\u=f$ that satisfies
\begin{equation*}
\int_\Omega |D\u(x)|^2 x_1^{2(\gamma-1)}\omega(x_1)^{-2} \dx \leq C^2\int_{\Omega}|f(x)|^2 \omega(x_1)^{-2} \dx,
\end{equation*}
where $\omega(x_1):=(1-\ln(x_1))^{\alpha} $.
\end{corollary}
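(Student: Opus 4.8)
The plan is to deduce Corollary \ref{Corollary 2} directly from Theorem \ref{Main} by verifying its two hypotheses for the weight $\omega(x_1)=(1-\ln(x_1))^\alpha$: first that $\omega$ is admissible in the sense of Definition \ref{admissible weights}, and second that the sequence $u_i:=|\Omega_i|\,\omega^2(2^{-i})$ satisfies the stated weighted discrete Hardy inequality. Both reduce to elementary but careful estimates, and no restriction on $\alpha$ will be needed.

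For admissibility, I would verify the defining requirement of Definition \ref{admissible weights}, which controls the oscillation of $\omega$ on each $\Omega_i$, that is, that $\omega(x_1)$ is comparable to the constant $\omega(2^{-i})$ on $\Omega_i$ with a constant $C_\omega$ independent of $i$. Since $x_1\in(2^{-(i+2)},2^{-i})$ on $\Omega_i$ gives $1-\ln(x_1)\in(1+i\ln 2,\,1+(i+2)\ln 2)$, the ratio of the endpoints equals $1+\frac{2\ln 2}{1+i\ln 2}\leq 1+2\ln 2$ uniformly in $i$; raising to the power $\alpha$ shows that $\omega(x_1)$ and $\omega(2^{-i})$ are comparable with an $i$-independent constant, which is exactly the oscillation control required.

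For the Hardy inequality I would invoke the characterization of \cite{AH} with $p=q=2$ and $v_n=u_n$: the inequality holds if and only if $A=\sup_{k\geq 1}\big(\sum_{i=k}^\infty u_i\big)^{1/2}\big(\sum_{i=1}^k u_i^{-1}\big)^{1/2}<\infty$. A direct area computation gives $|\Omega_i|=\frac{1-2^{-2(\gamma+1)}}{\gamma+1}\,2^{-i(\gamma+1)}$, so that $u_i\simeq 2^{-i(\gamma+1)}(1+i\ln 2)^{2\alpha}$. The decisive observation is that the logarithmic factor is slowly varying while $2^{-i(\gamma+1)}$ decays geometrically (because $\gamma\geq 1$). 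Hence the tail $\sum_{i=k}^\infty u_i$ is comparable to its leading term $u_k$, the head $\sum_{i=1}^k u_i^{-1}$ is comparable to its last term $u_k^{-1}$, and the two logarithmic factors cancel, so the product in $A$ is bounded uniformly in $k$ and Theorem \ref{Main} applies.

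The main point, and the only place requiring care, is the estimation of the two sums defining $A$: one must show that the slowly varying factor $(1+i\ln 2)^{2\alpha}$ does not spoil the geometric comparison, uniformly in $k$ and for every real $\alpha$. This is precisely why, unlike the power-weight case of Corollary \ref{Corollary 1} where the condition $\beta>\frac{-\gamma-1}{2}$ is needed to keep $u_i$ geometrically decaying, no restriction on $\alpha$ appears here. A convenient way to make the tail estimate rigorous is to bound the ratio $u_{i+1}/u_i=2^{-(\gamma+1)}\big(\frac{1+(i+1)\ln 2}{1+i\ln 2}\big)^{2\alpha}$, which tends to $2^{-(\gamma+1)}<1$ and is uniformly bounded away from $1$ for $i$ large, reducing the tail to a geometric series; the head is handled symmetrically. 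I would avoid reducing to Corollary \ref{Corollary 1} by sandwiching $\omega$ between powers of $x_1$, since $\omega^{-2}$ enters both sides of the estimate and such a substitution would alter both norms simultaneously.
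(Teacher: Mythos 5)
Your proposal is correct and follows the same overall route as the paper: Corollary \ref{Corollary 2} is deduced from Theorem \ref{Main} by checking that $\omega(x_1)=(1-\ln(x_1))^{\alpha}$ is admissible and that $u_i=|\Omega_i|\,\omega^2(2^{-i})\simeq 2^{-i(\gamma+1)}(1+i\ln 2)^{2\alpha}$ passes the Andersen--Heinig test of Theorem \ref{Characterization}; this is exactly the content of the paper's Lemma \ref{Selena Van 3}, and your verification of the oscillation condition \eqref{admissible} is literally the paper's computation, giving $C_\omega=(1+2\ln 2)^{|\alpha|}$. Where you genuinely diverge is in bounding $A$: the paper splits into cases, handling $\alpha<0$ by monotonicity of the logarithmic factor and $\alpha>0$ by comparing the tail sum with an integral, integrating by parts, and evaluating a limit via L'Hospital's rule to control the head sum. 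Your ratio-test argument --- $u_{i+1}/u_i\to 2^{-(\gamma+1)}<1$, uniformly bounded away from $1$ for $i$ large, so the tail $\sum_{i\geq k}u_i$ is comparable to its first term and the head $\sum_{i\leq k}u_i^{-1}$ to its last term, with the slowly varying logarithmic factors cancelling in the product --- is simpler, treats all $\alpha\in\R$ at once, and is rigorous provided you dispose of the finitely many indices below the threshold $i_0$ separately (the factor $\bigl(\sum_{i\geq k}u_i\bigr)^{1/2}$ is bounded, so the fixed contribution of $\sum_{i<i_0}u_i^{-1}$ is harmless). This is a clean simplification of the paper's case analysis.

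One small omission: Definition \ref{admissible weights} requires \emph{two} things, the oscillation bound \eqref{admissible} \emph{and} $\omega^p\in L^1(\Omega)$; you address only the former. The integrability is what guarantees $L^2(\Omega,\omega^{-2})\subset L^1(\Omega)$, so that the vanishing-mean-value hypothesis on $f$ is meaningful, and the paper checks it explicitly. It does hold for every $\alpha$, since $\int_0^1 x_1^{\gamma}(1-\ln (x_1))^{2\alpha}\,dx_1<\infty$ (for $\alpha\le 0$ the integrand is at most $x_1^{\gamma}$; for $\alpha>0$ the logarithm grows slower than any negative power of $x_1$), but the step should be stated.
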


The article is organized as follows: In Chapter \ref{Decomposition}, we show that the weighted discrete Hardy inequality, with some appropriate weights, implies the validity of a certain decomposition of functions in which our local-to-global argument is based. The main result in this chapter might be of interest for applications to other inequalities and related results in Sobolev spaces. In this chapter, we consider general $1<p,q<\infty$, with $\frac{1}{p}+\frac{1}{q}=1$. Then, we use the estimate of the constant in the divergence equation provided by Costabel and Dauge \cite{CD} for $p=q=2$ to prove Theorem \ref{Main}. In Chapter \ref{Selena Van Intro}, we prove the validity of the corollaries stated in the introduction that claim the solvability of the divergence equation in weighted spaces for power weights and powers of logarithmic weights.

The novelty of this work lies in the use of the well-studied weighted discrete Hardy inequality to get new sufficient conditions on the weights that imply the solvability of the divergence equation, recovering the existing results in \cite{DL,L1} when the weights are powers of the distance to the cusp/origin. The second corollary using powers of logarithmic weights is also new. 
 
\section{A decomposition of functions and applications}
\label{Decomposition}
\setcounter{equation}{0}

We name a {\it weight} $\nu:\Omega\to\R$ a positive and Lebesgue-measurable function, and a {\it sequence weight} $\{\nu_i\}_{i\geq 1}$ a sequence of positive real numbers. We will denote by $x=(x_1,x_2)$ a general point in $\R^2$.

\begin{definition}\label{admissible weights} A weight $\omega:\Omega\to \R$ is called {\it admissible} if $\omega^p\in L^1(\Omega)$ and there exists a uniform constant $C_\omega$ such that 
\begin{equation}\label{admissible}
\esssup_{x\in \Omega_i} \omega(x)\leq C_\omega \essinf_{x\in \Omega_i} \omega(x),
\end{equation}
for all $i\geq 0$. Notice that admissible weights are subordinate to a partition $\{\Omega_i\}_{i\geq 0}$ of $\Omega$ introduced in \eqref{Partition}, and $1<p<\infty$. 
\end{definition}

\begin{example} The function $\omega(x):=x_1^{\beta}$, where $\beta>\frac{-\gamma-1}{p}$, is an admissible weight with $C_\omega=2^{2|\beta_0|}$, where $\beta_0:=\frac{-\gamma-1}{p}$.
\end{example}

\begin{definition}\label{decomp} Given $g:\Omega\to\R$ integrable function with vanishing mean value, i.e. $\int g=0$, we refer by {\it a $\C$-orthogonal decomposition of $g$} subordinate to $\{\Omega_i\}_{i\geq 0}$ to a collection of integrable functions $\{g_i\}_{i\geq 0}$ with the following properties: 
\begin{enumerate}
\item $g(x)=\sum_{i\geq 0} g_i(x).$
\item $\supp (g_i)\subset \Omega_i,$ for all $i\geq 0$.
\item $\int_{\Omega_i} g_i =0$, for all $i\geq 0.$
\end{enumerate}
\end{definition}

The letter $\C$ in the previous definition refers to the space of constant functions. Notice that having vanishing mean value could also be  understood as being orthogonal to the functions in $\C$. Other applications of this type of decomposition of functions require to have orthogonality to other spaces (see \cite{L2,L3}). We also refer the reader to \cite{HL} for applications to a fractional Poincar\'e type inequality.

We show the existence of a $\C$-orthogonal decomposition by using a constructive argument introduced in \cite{L1}. \label{example} Let us describe the idea of this argument assuming that $\Omega$ is the union of the first three subdomains in partition defined in \eqref{Partition}. Thus, let $f\in L^1(\O)$ be a function with vanishing mean value. Then, using a partition of the unity $\{\phi_i\}_{0\leq i\leq 2}$ subordinate to $\{\O_i\}_{0\leq i\leq 2}$ we can write $g$ as: 
\[g=f_0+f_1+f_2=g\phi_0+g\phi_1+g\phi_2.\]
However, this partition might not be orthogonal to $\C$. In order to get this property we make the following arrangements:
\begin{align*}
g=f_0+\left(f_1+\dfrac{\chi_{B_2}}{|B_2|}\int_{\O_2}f_2\right) + \underbrace{\left(f_2-\dfrac{\chi_{B_2}}{|B_2|}\int_{\O_2}f_2\right)}_{f_2-h_2},
\end{align*}
where $B_2:=\O_2\cap \O_1$. Note that the function $f_2-h_2$ has its support in $\O_2$ and $\int f_2-h_2=0$. Finally, we repeat the process with the first two functions. Thus, if $B_1:=\O_1\cap \O_0$ we have that 
\begin{multline}\label{Bogovskii}
f=\overbrace{\left(f_0+\dfrac{\chi_{B_1}}{|B_1|}\int_{\O_1\cup\O_2}f_1+f_2\right)}^{f_0-h_0}\\ 
+\underbrace{\left(f_1+\dfrac{\chi_{B_2}}{|B_2|}\int_{\O_2}f_2-\dfrac{\chi_{B_1}}{|B_1|}\int_{\O_1\cup\O_2}f_1+f_2\right)}_{f_1-h_1} + \underbrace{\left(f_2-\dfrac{\chi_{B_2}}{|B_2|}\int_{\O_2}f_2\right)}_{f_2-h_2},
\end{multline}
obtaining the claimed decomposition. Observe that we have used the vanishing mean value of $f$ only to prove that $f_0-h_0$ integrates zero.

Now, let us introduce the following weighted discrete Hardy-type inequalities: 
\begin{equation}\label{dHardy 2}
\sum_{j=1}^\infty |\Omega_j| \omega_j^{p}\left(\sum_{i=1}^j d_i \right)^p\leq C_H^p\sum_{j=1}^\infty |\Omega_j| \omega_j^{p} d_j^p,
\end{equation}
and
\begin{equation}\label{dHardy 1}
\sum_{i=1}^\infty |\Omega_i|^{1-q} \omega_i^{-q}\left(\sum_{j=i}^\infty b_j \right)^q\leq C_H^q\sum_{i=1}^\infty |\Omega_i|^{1-q} \omega_i^{-q} b_i^q.
\end{equation}
The first one is inequality \eqref{wHardy} to the $p$ power where the sequence weight $u_n=v_n=|\Omega_n|\omega_n^p$, and the second one is its dual version. The following lemma follows from this duality.

\begin{lemma}\label{Equiv} Given a sequence weight $\{\omega_i\}_{i\geq 1}$, inequality (\ref{dHardy 1}) is valid for any non-negative sequence $\{b_i\}_{i\geq 1}$ if and only if inequality (\ref{dHardy 2}) is valid for any non-negative sequence $\{d_j\}_{j\geq 1}$, with the same constant $C_H$.
\end{lemma}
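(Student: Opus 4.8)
The plan is to recognize both inequalities as statements about the operator norm of a single linear operator and its adjoint, which is precisely the ``duality'' alluded to in the statement. Set $w_j:=|\Omega_j|\,\omega_j^p$ and let $T$ be the partial-sum operator $(Td)_j=\sum_{i=1}^j d_i$ acting on the weighted sequence space $\ell^p(w)$ with norm $\|a\|_{\ell^p(w)}=\left(\sum_j w_j|a_j|^p\right)^{1/p}$. With this notation, \eqref{dHardy 2} says exactly that $\|Td\|_{\ell^p(w)}\le C_H\|d\|_{\ell^p(w)}$ for non-negative $d$. Since the kernel of $T$ is non-negative, $|(Td)_j|\le (T|d|)_j$ holds termwise, so restricting to non-negative sequences does not change the operator norm; hence \eqref{dHardy 2} holds with constant $C_H$ if and only if $\|T\|_{\ell^p(w)\to\ell^p(w)}\le C_H$.

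First I would identify the dual space and the adjoint. Using the (unweighted) pairing $\langle a,b\rangle=\sum_j a_j b_j$ together with H\"older's inequality with exponents $p,q$, one checks that $\big(\ell^p(w)\big)^*=\ell^q(w^{1-q})$ isometrically, and the bookkeeping in the exponents gives $w_i^{1-q}=\big(|\Omega_i|\,\omega_i^p\big)^{1-q}=|\Omega_i|^{1-q}\,\omega_i^{p(1-q)}=|\Omega_i|^{1-q}\,\omega_i^{-q}$, where $p(1-q)=-q$ follows from $\frac1p+\frac1q=1$. Swapping the order of summation in
\[
\langle Td,b\rangle=\sum_{j\ge1}\Big(\sum_{i=1}^j d_i\Big)b_j=\sum_{i\ge1}d_i\Big(\sum_{j\ge i} b_j\Big)=\langle d,T^*b\rangle
\]
then identifies the adjoint as the tail-sum operator $(T^*b)_i=\sum_{j\ge i}b_j$. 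The same non-negativity remark shows that \eqref{dHardy 1} holds with constant $C_H$ if and only if $\|T^*\|_{\ell^q(w^{1-q})\to\ell^q(w^{1-q})}\le C_H$.

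To conclude, I would invoke the equality of the norms of an operator and its adjoint, which here I prefer to phrase directly through the bilinear form so as to avoid any abstract operator theory: for finitely supported $b\ge0$,
\[
\|T^*b\|_{\ell^q(w^{1-q})}=\sup_{\|d\|_{\ell^p(w)}\le1}\langle d,T^*b\rangle=\sup_{\|d\|_{\ell^p(w)}\le1}\langle Td,b\rangle\le C_H\,\|b\|_{\ell^q(w^{1-q})},
\]
and symmetrically for the reverse direction (the construction is involutive, since the adjoint of $T^*$ is $T$ and the conjugate exponent of $q$ is $p$). Combining the two equivalences from the previous paragraphs yields both implications of the lemma with the \emph{same} constant $C_H$.

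The step requiring the most care is the handling of convergence: the tail sums $\sum_{j\ge i}b_j$ defining $T^*b$, and the duality identities, involve infinite series. I would circumvent this by first establishing the equivalence for finitely supported sequences, where every sum is finite and the H\"older extremizers are legitimate test sequences, and then passing to arbitrary non-negative sequences by monotone convergence, truncating at level $N$ and letting $N\to\infty$ in each side of \eqref{dHardy 2} and \eqref{dHardy 1}. Apart from this, the only genuinely computational point is the exponent identity $w^{1-q}=|\Omega_i|^{1-q}\,\omega_i^{-q}$, which is exactly what makes the two displayed weights in \eqref{dHardy 2} and \eqref{dHardy 1} dual to one another and the shared constant $C_H$ possible.
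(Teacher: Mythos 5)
Your argument is correct and is essentially the paper's proof: both reduce the two inequalities to the boundedness of the same bilinear form $\sum_j b_j\sum_{i\le j}d_i$ via $\ell^p$--$\ell^q$ duality (the paper via the explicit substitutions $\tilde d_j=|\Omega_j|^{1/p}\omega_j d_j$, $\tilde b_i=|\Omega_i|^{-1/p}\omega_i^{-1}b_i$, you via the dual weight identity $w_i^{1-q}=|\Omega_i|^{1-q}\omega_i^{-q}$) and then swap the order of summation. Your added care with truncation and monotone convergence is a reasonable refinement but does not change the route.
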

\begin{proof} By using the duality between $l^p$ and $l^q$, and defining $\tilde{d}_j:=|\Omega_j|^{1/p}\omega_j d_j$ and $\tilde{b}_i:=|\Omega_i|^{-1/p}\omega_i^{-1} b_i$, it follows that inequality \eqref{dHardy 2} and \eqref{dHardy 1} can be written as 
\begin{equation}\label{dHardy 2 bis}
\sup_{\|\tilde{d}\|_{l^p}=1}\sup_{\|\tilde{b}\|_{l^q}=1}\sum_{j=1}^\infty \tilde{b_j} |\Omega_j|^{1/p} \omega_j  \sum_{i=1}^j |\Omega_i|^{-1/p} \omega_i^{-1} \tilde{d}_i \leq C_H
\end{equation}
and
\begin{equation}\label{dHardy 1 bis}
\sup_{\|\tilde{b}\|_{l^q}=1}\sup_{\|\tilde{d}\|_{l^p}=1}\sum_{i=1}^\infty \tilde{d_i} |\Omega_i|^{-1/p} \omega_i^{-1}\sum_{j=i}^\infty |\Omega_j|^{1/p} \omega_j \tilde{b}_j \leq C_H
\end{equation}

Finally, one can obtain \eqref{dHardy 1 bis} from \eqref{dHardy 2 bis}, and viceversa, by changing the order of the summations.
\end{proof}

\begin{theorem}\label{Decomp Thm}  Let $\omega:\O\to\R$ be an admissible weight that satisfies \eqref{dHardy 2} for the sequence weight $\omega_i:=\omega(2^{-i})$. Then, given $g\in L^1(\Omega)$, with $\int_\O g=0$, there exists $\{g_t\}_{t\in\Gamma}$, a  $\C$-decomposition of $g$ subordinate to $\{\Omega_i\}_{i\geq 0}$ (see Definition \ref{decomp}), such that 
\begin{equation}\label{Decomp estim}
\sum_{i=0}^\infty \int_{\Omega_i}|g_i(x)|^q \omega^{-q}(x) \dx \leq C_d^q \int_{\Omega} |g(x)|^q \omega^{-q}(x)\dx.
\end{equation}
Moreover, we have the following estimate for the optimal constant $C_d$:
\begin{equation}\label{Decomp constant}
C_d\leq 2^{2+1/q}C_\omega^{2} C_H.
\end{equation}
\end{theorem}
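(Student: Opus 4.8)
The plan is to construct the decomposition explicitly, following the Bogovskii-type ``mass–transfer'' scheme sketched in \eqref{Bogovskii}, and then to reduce the estimate \eqref{Decomp estim} to a single application of the dual Hardy inequality \eqref{dHardy 1}. First I would fix a partition of unity $\{\phi_i\}_{i\geq 0}$ subordinate to $\{\Omega_i\}_{i\geq 0}$ with $0\leq\phi_i\leq 1$ and $\supp\phi_i\subset\Omega_i$, and set $f_i:=g\phi_i$, $m_i:=\int_\Omega f_i$. Writing $B_i:=\Omega_{i-1}\cap\Omega_i$ and
\[
h_i:=\frac{\chi_{B_i}}{|B_i|}\sum_{j\geq i}m_j\quad(i\geq 1),\qquad h_0:=0,
\]
I would then define $g_i:=f_i+h_{i+1}-h_i$. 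A short computation confirms the three defining properties of a $\C$-decomposition (Definition \ref{decomp}): the supports lie in $\Omega_i$ since $B_i,B_{i+1}\subset\Omega_i$; the sum telescopes, $\sum_{i=0}^N g_i=\sum_{i=0}^N f_i+h_{N+1}\to g$ because $g\in L^1(\Omega)$ forces the tail $\sum_{j>N}m_j\to0$; and $\int_{\Omega_i}g_i=m_i+\sum_{j\geq i+1}m_j-\sum_{j\geq i}m_j=0$ for $i\geq1$, while $\int_{\Omega_0}g_0=\sum_{j\geq0}m_j=\int_\Omega g=0$ by hypothesis.

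For the quantitative estimate I would work throughout in $L^q(\Omega,\omega^{-q})$, abbreviating its norm by $\|\cdot\|$. Since $\{\Omega_i\}$ has overlap at most two, $\sum_i\int_{\Omega_i}|g|^q\omega^{-q}\leq 2\|g\|^q$; in particular $\sum_i\|f_i\|^q\leq 2\|g\|^q$. The crucial observation is that $h_i$ and $h_{i+1}$ have \emph{disjoint} supports ($B_i$ lies in $\{2^{-(i+1)}<x_1<2^{-i}\}$ and $B_{i+1}$ in $\{2^{-(i+2)}<x_1<2^{-(i+1)}\}$), which together with the convexity bound $(a+b)^q\leq 2^{q-1}(a^q+b^q)$ gives the pointwise-in-$i$ inequality
\[
\|g_i\|^q\leq 2^{q-1}\bigl(\|f_i\|^q+\|h_i\|^q+\|h_{i+1}\|^q\bigr).
\]
Everything thus reduces to bounding $\sum_{i\geq1}\|h_i\|^q$.

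This is where admissibility and inequality \eqref{dHardy 1} enter, and it is the main step. Using that $\omega$ depends only on $x_1$ and \eqref{admissible}, I would replace $\omega$ by the constant $\omega_i=\omega(2^{-i})$ on each $\Omega_i$ at the cost of $C_\omega$, obtaining $\int_{B_i}\omega^{-q}\leq C_\omega^q\omega_i^{-q}|B_i|$; combined with the explicit ratio $|B_i|=|\Omega_i|/(1+2^{-(\gamma+1)})\geq|\Omega_i|/2$ this yields
\[
\|h_i\|^q\leq 2^{q-1}C_\omega^q\,|\Omega_i|^{1-q}\omega_i^{-q}\Bigl(\sum_{j\geq i}|m_j|\Bigr)^q.
\]
Summing over $i$ and applying the dual Hardy inequality \eqref{dHardy 1} with $b_j=|m_j|$ (valid with the same constant $C_H$ by Lemma \ref{Equiv}) collapses the inner sum and leaves $\sum_{i\geq1}|\Omega_i|^{1-q}\omega_i^{-q}|m_i|^q$. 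Finally I would bound each mass by H\"older, $|m_i|\leq\int_{\Omega_i}|g|\leq(\int_{\Omega_i}|g|^q\omega^{-q})^{1/q}(\int_{\Omega_i}\omega^p)^{1/p}$, and use $\int_{\Omega_i}\omega^p\leq C_\omega^p\omega_i^p|\Omega_i|$ together with the identity $q/p=q-1$; this makes the factors $|\Omega_i|^{1-q}\omega_i^{-q}$ cancel exactly and gives $|\Omega_i|^{1-q}\omega_i^{-q}|m_i|^q\leq C_\omega^q\int_{\Omega_i}|g|^q\omega^{-q}$. Summing and invoking the overlap bound once more produces $\sum_{i\geq1}\|h_i\|^q\leq 2^{q}C_\omega^{2q}C_H^q\|g\|^q$.

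Assembling the pieces, the displayed pointwise bound summed over $i$ gives $\sum_i\|g_i\|^q\leq 2^{q-1}\bigl(2\|g\|^q+2\cdot 2^qC_\omega^{2q}C_H^q\|g\|^q\bigr)\leq 2^{2q+1}C_\omega^{2q}C_H^q\|g\|^q$, which is precisely \eqref{Decomp estim} with the claimed constant \eqref{Decomp constant}, $C_d\leq 2^{2+1/q}C_\omega^2C_H$. I expect the main obstacle to be the bookkeeping of the third paragraph: one must keep the continuous weight $\omega^{-q}$ and the discrete data $\omega_i^{-q},|\Omega_i|$ aligned so that \eqref{dHardy 1} applies verbatim, and the two separate uses of admissibility (once on $\int_{B_i}\omega^{-q}$, once on $\int_{\Omega_i}\omega^p$) are exactly what generate the factor $C_\omega^2$. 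A secondary technical point is justifying convergence of $\sum g_i$ and the vanishing of the tail masses, which relies only on $g\in L^1(\Omega)$ and the vanishing mean hypothesis.
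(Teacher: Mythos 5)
Your proposal is correct and follows essentially the same route as the paper: the same partition-of-unity construction with the telescoping corrections $h_i=\frac{\chi_{B_i}}{|B_i|}\sum_{j\geq i}m_j$, the same use of the disjoint supports of $h_i$ and $h_{i+1}$, the reduction to the dual Hardy inequality \eqref{dHardy 1} via Lemma \ref{Equiv}, and the two applications of admissibility (your weighted H\"older step is just the paper's unweighted H\"older plus admissibility packaged differently), arriving at the same constant $2^{2+1/q}C_\omega^2C_H$.
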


\begin{proof} 
The decomposition  treated here follows the example with three subdomains in page \pageref{example}. Indeed, let $\{\phi_i\}_{i\geq 0}$ be a partition of unity subordinate to the collection $\{\O_i\}_{i\geq 0}$. Namely, a collection of smooth functions such that $\sum_{i\geq 0} \phi_i=1$, $0\leq \phi_i\leq1$ and $supp(\phi_i)\subset\Omega_i$. Thus, $g$ can be cut-off into $g=\sum_{i\geq 0} f_i$ by taking $f_i=g\phi_i$. This decomposition satisfies (1) and (2) in Definition \ref{decomp} but not necessarily (3). Thus, we make the following modifications to $\{f_i\}_{i\geq 0}$ to obtain a collection of functions that also satisfies (3). Indeed, for any $i\geq 1$,
\begin{align}\label{P0-decomposition}
g_i(x):=f_i(x)+h_{i+1}(x)-h_i(x), 
\end{align}
where 
\begin{align}\label{h}
h_i(x)&:=\dfrac{\chi_{i}(x)}{|B_{i}|}\int_{W_{i}}\sum_{k\geq i}f_k, \nonumber \\
B_i&:=\Omega_i\cap \Omega_{i-1},\\
W_i&:=\bigcup_{k\geq i} \Omega_k. \nonumber
\end{align}

We denote by $\chi_i$ the characteristic function of $B_i$. Notice that the auxiliary function $h_i$ is not defined for $i=0$, thus $g_0$ follows in this other way 
\[g_0(x)=f_0(x)+h_1(x).\]

This decomposition was introduced in \cite{L1} in a more general way where the natural numbers in the subindex set is replaced by a set with a partial order given by a structure of tree (i.e. connected graph without cycles). We also use in this article inequality \eqref{dHardy 2} instead of another Hardy type inequality on trees introduced in \cite{L1}. Thus, it only remains to show estimate \eqref{Decomp estim}. Notice that $h_i$ and $h_{i+1}$ have disjoint supports thus 
\begin{equation*}
|h_{i+1}(x)-h_i(x)|^q=|h_{i+1}(x)|^q+|h_{i}(x)|^q.
\end{equation*}

Next, using that $|a+b|^q\leq 2^{q-1}(|a|^q+|b|^q)$ for all $a,b\in\R$, we have
\begin{align}\label{equation1}&\sum_{i=0}^\infty \int_{\Omega_i}|g_i(x)|^q \omega^{-q}(x_1)\dx \notag\\
\leq\, & 2^{q-1}\left(\sum_{i=0}^\infty \int_{\Omega_i}|f_i(x)|^q\omega^{-q}(x_1)\dx+2\sum_{i=1}^\infty \int_{\Omega_i}|h_i(x)|^q\omega^{-q}(x_1)\dx\right)\notag\\ 
\leq\, & 2^{q} \left(\int_{\Omega}|g(x)|^q\omega^{-q}(x_1)\dx+\sum_{i=1}^\infty \int_{\Omega_i}|h_i(x)|^q\omega^{-q}(x_1)\dx\right).
\end{align}

Let us work over the sum on the right hand side in the previous inequality by using the weighted discrete Hardy inequality. Notice that from the definition of the auxiliary functions in \eqref{h} and inequality \eqref{admissible} in Definition \ref{admissible weights} it follows that
\begin{align*}
|h_i(x)|\leq \dfrac{\chi_{i}(x)}{|B_{i}|}\sum_{k= i}^\infty \int_{\Omega_i}|g|
\end{align*}
and
\begin{align*}
\int_{\Omega_i}\dfrac{\chi_{i}(x)}{|B_{i}|^q}\omega^{-q}(x_1)\dx\leq C_\omega^q \omega_i^{-q} |B_i|^{1-q}.
\end{align*}

Therefore, since $|\Omega_i|<2|B_i|$ for any $i\geq 1$, the sum in inequality \eqref{equation1} is bounded by 
\begin{align*}
\sum_{i=1}^\infty \int_{\Omega_i}|h_i(x)|^q\omega^{-q}(x_1)\dx &\leq C_\omega^{q} \sum_{i=1}^\infty |B_i|^{1-q}\omega^{-q}_i  \left(\sum_{k=i}^\infty \int_{\Omega_k} |g|\right)^q\\ 
&\leq 2^{q-1}C_\omega^q \sum_{i=1}^\infty  |\Omega_i|^{1-q} \omega^{-q}_i\left(\sum_{k=i}^\infty \int_{\Omega_k} |g|\right)^q.
\end{align*}
Next, by using Lemma \ref{Equiv} with $b_i=\int_{\Omega_i} |g|$, $i\geq 1$, and H\"older inequality, we can conclude that 
\begin{align*}
\sum_{i=1}^\infty \int_{\Omega_i}|h_i(x)|^q\omega^{-q}(x_1)\dx &\leq 2^{q-1}C_\omega^q C_H^q \sum_{i=1}^\infty |\Omega_i|^{1-q} \omega^{-q}_i  \left(\int_{\Omega_i} |g|\right)^q\\ 
&\leq 2^{q-1}C_\omega^q C_H^q \sum_{i=1}^\infty \omega^{-q}_i  \left(\int_{\Omega_i} |g|^q\right)\\
&\leq 2^{q-1}C_\omega^{2q} C_H^q \sum_{i=1}^\infty  \int_{\Omega_i} |g(x)|^q\omega^{-q}(x_1)\dx\\
&\leq 2^qC_\omega^{2q} C_H^q \int_{\Omega} |g(x)|^q\omega^{-q}(x_1)\dx.
\end{align*}

Finally, from inequality \eqref{equation1} it follows \eqref{Decomp estim}. 
\end{proof}

In order to prove the solvability of the divergence equation on the subdomains $\Omega_i$ we use the following result proved by M. Costabel and M. Dauge in \cite{CD} for star-shaped domains. Let us recall the definition of this class of domains. A domain $U$ is {\it star-shaped with respect to a ball $B$} if and only if any segment with an end-point in $U$ and the other one in $B$ is contained in $U$. 

\begin{theorem}\label{Costabel Dauge Thm}
Let $U\subset\R^2$ be a domain contained in a ball of radius $R$, star-shaped with respect to a concentric ball of radius $r$. Then, for any $g\in L^2(U)$ with vanishing mean value there exists a solution $\u\in H^1_0(U)^2$ of the equation $\div\u=g$ satisfying the estimate
\begin{equation*}
\left(\int_U |D\u(x)|^2\dx\right)^{1/2}\leq \frac{2R}{r} \left(\int_U |g(x)|^2\dx\right)^{1/2}.
\end{equation*}
\end{theorem}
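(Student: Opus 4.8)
The statement is the classical solvability of the divergence equation on a star-shaped domain, and the natural route is the Bogovskii integral operator, whose construction is tailored exactly to the hypothesis of star-shapedness with respect to an inner ball. The plan is to write down an explicit solution operator $g\mapsto\u$, check by hand that it produces a compactly supported field solving $\div\u=g$, and then reduce the entire theorem to a single quantitative statement: the $L^2\to L^2$ boundedness of the operator $g\mapsto D\u$, with operator norm controlled by the geometric ratio $R/r$.

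First I would fix a cut-off $\theta\in C_0^\infty(B)$ on the inner ball $B$ of radius $r$ with $\int_B\theta=1$, and for $g\in C_0^\infty(U)$ with $\int_U g=0$ define
\[
\u(x):=\int_U g(y)\,\frac{x-y}{|x-y|^2}\left(\int_{|x-y|}^{\infty}\theta\!\left(y+s\,\frac{x-y}{|x-y|}\right)s\,\d s\right)\dy .
\]
The verification that $\div\u=g$ is a direct computation after the change of variables $z=y+s\,(x-y)/|x-y|$ and integration by parts, where the vanishing mean value $\int_U g=0$ is precisely what guarantees the boundary term drops and the identity holds on all of $U$. Star-shapedness ensures that the inner integral only samples $\theta$ along segments that remain inside $U$, so that $\u$ inherits compact support in $U$; together with the interior regularity this places $\u\in H^1_0(U)^2$. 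Once the norm estimate below is established on the dense class $C_0^\infty(U)\cap\{\int g=0\}$, a standard density argument extends both the solvability and the bound to every $g\in L^2(U)$ of vanishing mean value.

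The heart of the matter is the estimate on $D\u$. Differentiating the kernel in $x$ splits $\partial_{x_j}u_i$ into a leading term whose kernel is homogeneous of degree $-2$ in $x-y$ with vanishing average over circles — i.e.\ a two-dimensional Calder\'on--Zygmund kernel — plus a remainder kernel with only a $|x-y|^{-1}$ (integrable) singularity coming from the derivative falling on $\theta$. For the Calder\'on--Zygmund part I would invoke the $L^2$ boundedness of such singular integrals (a $T(1)$/Fourier-multiplier argument, or the classical Calder\'on--Zygmund theorem), and for the remainder a Schur test or Young's convolution inequality suffices. This yields $\|D\u\|_{L^2(U)}\le C\,\|g\|_{L^2(U)}$ for a constant depending on the geometry of $U$.

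The main obstacle is not the existence of \emph{some} constant but pinning down the sharp, transparent dependence $2R/r$. A crude application of Calder\'on--Zygmund theory produces an uncontrolled $C(U)$, whereas the theorem demands an explicit bound. The right way to recover it is to rescale so that $r=1$, after which $U$ sits in a ball of radius $R/r$ and the entire configuration depends on the single scale-invariant parameter $R/r$; one then tracks how the support of $\theta$, the kernel's size, and the Calder\'on--Zygmund constant depend on that parameter and optimizes. This careful bookkeeping of the geometry-to-constant dependence is exactly the delicate contribution of Costabel and Dauge, and I expect it — rather than the construction of $\u$ or the soft $L^2$ bound — to be the genuinely hard step.
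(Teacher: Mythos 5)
First, a structural point: the paper does not prove this statement at all --- it is quoted verbatim from Costabel and Dauge \cite{CD} and used as a black box on each subdomain $\Omega_i$. So there is no internal proof to compare against, and your attempt has to be judged on its own merits.

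On its own terms, your Bogovski\u{\i}-operator sketch is a sound route to the \emph{qualitative} statement (existence of $\u\in H^1_0(U)^2$ with $\div\u=g$ and $\|D\u\|_{L^2}\le C(U)\|g\|_{L^2}$): the integral formula, the support argument via star-shapedness, the identity $\div\u=g-\theta\int_U g$, and the splitting into a Calder\'on--Zygmund part plus a weakly singular remainder are all standard and correct. The genuine gap is exactly the step you flag and then defer: producing the explicit constant $2R/r$. That constant is the entire reason this particular reference is invoked. Tracking constants through Calder\'on--Zygmund theory for the Bogovski\u{\i} kernel is known to yield bounds growing like a higher power of $R/r$ (typically with logarithmic factors), not linearly, and no amount of ``rescaling and optimizing the bookkeeping'' of the singular-integral constants is known to recover $2R/r$; Costabel and Dauge obtain the linear bound by a different argument, estimating the inf-sup (Babu\v{s}ka--Aziz) constant directly for star-shaped domains rather than bounding the Bogovski\u{\i} operator. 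Moreover, the linear dependence is not cosmetic in this paper: in the proof of Theorem \ref{Main} the estimate is applied with $R_i/r_i\sim \gamma\,2^{(\gamma-1)i+2\gamma+2}$, and squaring $2R_i/r_i$ is precisely what produces the factor $2^{2(\gamma-1)i}$ absorbed by the weight $x_1^{2(\gamma-1)}$, whose exponent the paper asserts is optimal. A bound of the form $C(R/r)^k$ with $k>1$ would force a larger exponent and break the main theorem. So the quantitative step you postpone is not a technicality; it is the theorem.
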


\begin{proof}[Proof of Theorem \ref{Main}]  Let $f$ be a function in $L^2(\Omega,\omega^{-2}(x_1))$ with vanishing mean value. Notice that, since $\omega$ is an admissible weight for $p=2$, $L^2(\Omega,\omega^{-2}(x_1))\subset L^1(\Omega)$ and the mean value of $f$ is well-defined. Then, from Theorem \ref{Decomp Thm}, there exists a $\C$-decomposition $\{f_i\}_{i\geq 0}$ of $f$ subordinate to $\{\Omega_i\}_{i\geq 0}$ satisfying \eqref{Decomp estim}. Now, let us assume, to be shown later in this proof, that $\Omega_i$ is included in a ball with radius $R_i=2^{-i+1}$ and  star-shaped with respect to a concentric ball $A_i$ with radius $r_i=2^{-\gamma(i+2)-1}/\gamma$. Then, from Theorem \ref{Costabel Dauge Thm}, there exists a solution of $\div\v^i=f_i$ in $\Omega_i$ that satisfies 
\begin{equation*}
\int_{\Omega_i} |D\v^i(x)|^2\dx\leq \gamma^2 2^{6+4\gamma} 2^{2(\gamma-1)i} \int_{\Omega_i} |f_i(x)|^2\dx.
\end{equation*}

Hence, by extending $\v^i$ by zero, the vector field $\u(x):=\sum_{i\geq 0} \v^i(x)$ satisfies that 
\[\div\u(x)=\div\sum_{i\geq 0} \v^i(x)=\sum_{i\geq 0} f_i(x)=f(x),\]
and 
\begin{align*}
&\int_\Omega |D\u(x)|^2 x_1^{2(\gamma-1)}\omega^{-2}(x_1) \dx\\
&\leq 2\sum_{i\geq 0} \int_{\Omega_i} |D\v^i(x)|^2  x_1^{2(\gamma-1)} \omega^{-2}(x_1) \dx\\
&\leq 2 C_\omega^2 \sum_{i\geq 0} 2^{-2i(\gamma-1)} \omega^{-2}(2^{-i}) \int_{\Omega_i} |D\v^i(x)|^2 \dx\\
&\leq  \gamma^2 2^{7+4\gamma} C_\omega^2 \sum_{i\geq 0} \omega^{-2}(2^{-i}) \int_{\Omega_i} |f_i(x)|^2 \dx\\
&\leq  \gamma^2 2^{7+4\gamma} C_\omega^4 \sum_{i\geq 0} \int_{\Omega_i} |f_i(x)|^2 \omega(x_1)^{-2}\dx\\
&\leq  \gamma^2 2^{12+4\gamma} C_\omega^8 C_H^2  \int_\Omega |f(x)|^2\dx.
\end{align*}

Finally, let us show that $\Omega_i$ is included in a ball with radius $R_i=2^{-i+1}$ and star-shaped with respect to a concentric ball $A_i$. 
Notice that $\Omega_i$ is included in the square $[0,2^{-i}]^2$ with diameter $2^{-i+1/2}$. Thus, any ball with center at a point in $\Omega_i$ and radius $R_i=2^{-i+1}$ contains $\Omega_i$. We define $A_i$ as the ball with radius $r_i:=\rho_i/2\gamma$, and center $c_i:= (2^{-i}-r_i,r_i)$, where $\rho_i=2^{-\gamma(i+2)}$, as shown in Picture \ref{Picture Star}.

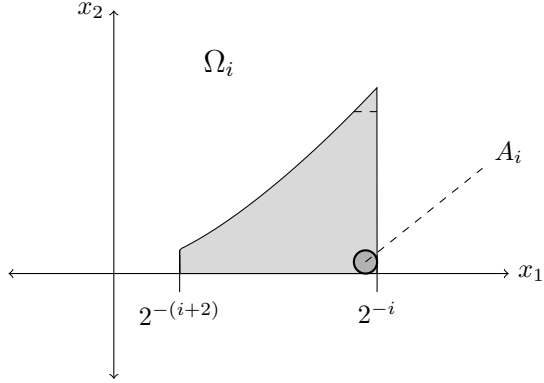
\begin{figure}[h]
\begin{tikzpicture}[xscale=7,yscale=7]
\draw[fill=gray!30!white] 
plot[smooth,samples=100,domain=1/8:1/2] (\x,{(\x)^1.5})  -- 
plot[smooth,samples=100,domain=1/2:1/8] (\x,{0}) --
plot[smooth,samples=100,domain=8^-1.5:0] ({1/8},\x);
\draw[<->] (-.2,0)--(3/4,0) node[right]{$x_1$};
\draw[<->] (0,-0.2)--(0,1/2) node[left]{$x_2$};
\foreach \x in {1/2}, 
    \draw (\x,1pt)--(\x,-1pt) node[below] {$2^{-i}$};
\foreach \x in {1/8}, 
    \draw (\x,1pt)--(\x,-1pt) node[below] {$2^{-(i+2)}$};    
\node at (.2,.4) {\Large $\Omega_i$};
\draw[thick, fill=gray!60!white] (1/2-.5*8^-1.5,.5*8^-1.5) circle (.5*8^-1.5);
\begin{scope}[dashed]
\draw (1/2-.5*8^-1.5,.5*8^-1.5) -- (.7,.2);
\node at (.75,.23) {$A_i$};
\draw (1/2-8^-1.5,.3077299) -- (1/2,.3077299);
\end{scope}
\end{tikzpicture}
\caption{$\Omega_i$ is a star-shaped domain.}\label{Picture Star}
\end{figure}

Now, given $y\in \Omega_i$ and $x\in A_i$, we have to show that the segment $\overline{xy}$ with end-points at $y$ and $x$ is included in $\Omega_i$.   

Now, the open rectangle $D_t$ with sides parallel to the axis and vertices $(2^{-i},0)$ and $(t,t^\gamma)$, for $2^{-i-2}\leq t\leq 2^{-i}-2r_i$, is convex, contains $B_i$ and is included in $\Omega_i$. Thus, the segment $\overline{xy}$ is included in $\Omega_i$ if $y$ belongs to $D_t$, for any $t$ in the interval $[2^{-i-2},2^{-i}-2r_i]$.

Hence, it is sufficient to prove the case where $y=(y_1,y_2)$ belongs to the region above (or over) the dashed line in Picture \ref{Picture Star}: $2^{-i}-2r_i<y_1<2^{-i}$ and $(2^{-i}-2r_i)^\gamma\leq y_2<y_1^\gamma$. Moreover, observe that if the segment $\overline{xy}$ is not included in $\Omega_i$ then its slope must be equal to $\gamma t^{\gamma-1}$, for some $2^{-i}-2r_i<t<2^{-i}$. Hence, it is sufficient to show that the slope of $\overline{xy}$ is larger than $\gamma$ i.e. 
\begin{equation*}
\frac{|y_2-x_2|}{|y_1-x_1|}\geq \gamma.
\end{equation*}
Now, it follows from some straightforward estimations that 
\begin{equation*}
|y_2-x_2|\geq 2^{-i\gamma}-|2^{-i\gamma}-y_2|-|x_2|,
\end{equation*}
where, $|x_2|\leq \rho_i$ and
\begin{equation*}
|2^{-i\gamma}-y_2|\leq |(2^{-i})^\gamma-(2^{-i}-2r_i)^\gamma|<\gamma 2r_i=\rho_i.
\end{equation*}
Then, 
\begin{equation*}
\frac{|y_2-x_2|}{|y_1-x_1|}\geq \frac{2^{2\gamma}\rho_i-2\rho_i}{\rho_i/\gamma}\geq 2\gamma.
\end{equation*}

\end{proof}

\section{The weighted discrete Hardy inequality}
\label{Selena Van Intro}

In this chapter, we prove the two corollaries stated in the Introduction about the solvability of the divergence equation in weighted Sobolev spaces for the weights $\omega(x)=x_1^\beta$ and $\omega(x)=(1-\ln(x_1))^\alpha$. Notice that Theorem \ref{Main} requires $p=2$, however, we analyze the general case $1<p<\infty$ since Theorem \ref{Decomp Thm}, which does not have the constraint $p=2$, can be used to obtain other inequalities (such as the weighted fractional Poincar\'e inequality \cite{HL}) in our cuspidal domain $\Omega$.

Let us recall the characterization of the weighted discrete Hardy inequality proved by K. F.  Andersen and H. P. Heinig. We also refer to  \cite[page 56]{KPS} and \cite{O} for more details.

\begin{theorem}\label{Characterization} Let $\{u_i\}_{i\geq 1}$ and $\{v_i\}_{i\geq 1}$ be sequence weights, and  the conjugate exponents ${1<p,q<\infty}$, i.e. $\frac{1}{p}+\frac{1}{q}=1$, then inequality \eqref{wHardy} is valid if and only if 
\begin{equation*}
A=\sup_{k\geq 1} \left(\sum_{i=k}^\infty u_i\right)^{\frac{1}{p}} \left(\sum_{i=1}^k v_i^{1-q} \right)^{\frac{1}{q}} <\infty.
\end{equation*}
In addition, if $C_H$ represents the optimal constant in \eqref{wHardy}, then
\begin{align*}
A &\leq C_H\leq 4 A
\end{align*}

\end{theorem}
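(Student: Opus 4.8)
My plan is to establish the two inequalities $A\le C_H$ and $C_H\le 4A$ separately; the equivalence between the finiteness of $A$ and the validity of \eqref{wHardy} then follows at once. Throughout I would write $\sigma_i:=v_i^{1-q}$, $V_k:=\sum_{i=1}^k\sigma_i$ (with $V_0:=0$), and $U_k:=\sum_{i=k}^\infty u_i$, so that the condition reads $A=\sup_{k}U_k^{1/p}V_k^{1/q}$, and I may assume $0<v_i<\infty$, the degenerate cases being treated directly.

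Necessity, $A\le C_H$, is the easy direction and I would obtain it from an explicit test sequence. Fix $k$ and set $a_i=\sigma_i$ for $i\le k$ and $a_i=0$ otherwise. Using the conjugacy identity $pq=p+q$, the right-hand side of \eqref{wHardy} raised to the $p$ collapses to $\sum_{i=1}^k v_i\sigma_i^{p}=\sum_{i=1}^k\sigma_i=V_k$, while on the left every partial sum with $n\ge k$ equals $V_k$, so the left-hand side is at least $\big(U_kV_k^{p}\big)^{1/p}=U_k^{1/p}V_k$. Plugging into \eqref{wHardy} and cancelling the factor $V_k^{1/p}$ yields $U_k^{1/p}V_k^{1/q}\le C_H$, and taking the supremum over $k$ gives $A\le C_H$.

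For sufficiency, $C_H\le 4A$, I would run the Muckenhoupt-type argument built on the auxiliary weight $V_k$. The first step is a weighted Hölder inequality on the inner sum: for a parameter $r\in(0,1)$ to be optimized, write $a_k=\big(a_kv_k^{1/p}V_k^{r/q}\big)\big(v_k^{-1/p}V_k^{-r/q}\big)$ and apply Hölder with exponents $p,q$ to get $S_n^{p}\le\big(\sum_{k\le n}\sigma_kV_k^{-r}\big)^{p/q}\sum_{k\le n}a_k^{p}v_kV_k^{rp/q}$, where $S_n=\sum_{k\le n}a_k$. Since $\sigma_k=V_k-V_{k-1}$ and $t\mapsto t^{-r}$ is decreasing, the discrete integral comparison $\sum_{k\le n}\sigma_kV_k^{-r}\le\int_0^{V_n}t^{-r}\,dt=(1-r)^{-1}V_n^{1-r}$ controls the first factor by a power of $V_n$ alone, giving $S_n^{p}\le(1-r)^{-p/q}V_n^{(1-r)p/q}\sum_{k\le n}a_k^{p}v_kV_k^{rp/q}$.

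The heart of the matter is then to sum against $u_n$: after interchanging the order of summation one is left to bound, uniformly in $k$, the tail $V_k^{rp/q}\sum_{n\ge k}u_nV_n^{(1-r)p/q}$. Here I would use $U_n\le A^{p}V_n^{-p/q}$ together with Abel summation, writing $u_n=U_n-U_{n+1}$ and comparing $\sum_n U_n\big(V_n^{\theta}-V_{n-1}^{\theta}\big)$, with $\theta=(1-r)p/q$, to the convergent integral $\int t^{-p/q+\theta-1}\,dt$; convergence is guaranteed since $\theta-p/q=-rp/q<0$, and the same sign makes the boundary term at infinity vanish because $A<\infty$. This returns the tail bounded by a constant multiple of $A^{p}V_k^{-rp/q}$, which exactly cancels the prefactor $V_k^{rp/q}$. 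Collecting constants gives $C_H^{p}\le(1-r)^{-p/q}\,c(r)\,A^{p}$, and optimizing the free parameter $r$ produces $C_H\le 4A$. The main obstacle is precisely this last estimate: controlling the boundary terms in the summation by parts, justifying the integral comparisons and the vanishing of the tail term from $A<\infty$, and then tracking the constants through the optimization carefully enough to reach the clean value $4$ rather than a larger $p,q$-dependent constant.
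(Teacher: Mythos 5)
The paper does not prove Theorem \ref{Characterization} at all: it is quoted from Andersen--Heinig \cite{AH} (with pointers to \cite{KPS,O}), so there is no in-paper argument to compare against. Your proposal is the standard discrete Muckenhoupt-type proof of that cited result, and it is essentially correct. The necessity step with the test sequence $a_i=v_i^{1-q}\chi_{\{i\le k\}}$ is exactly right (using $(1-q)p=-q$ so that $v_ia_i^p=v_i^{1-q}$), and the sufficiency step goes through as sketched: H\"older with the splitting $a_k=(a_kv_k^{1/p}V_k^{r/q})(v_k^{-1/p}V_k^{-r/q})$, the comparison $\sum_{k\le n}\sigma_kV_k^{-r}\le(1-r)^{-1}V_n^{1-r}$, interchange of the (nonnegative) double sum, and then Abel summation with $U_n\le A^pV_n^{-p/q}$ to get $\sum_{n\ge k}u_nV_n^{(1-r)p/q}\le r^{-1}A^pV_k^{-rp/q}$. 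Two small remarks: the boundary term $-U_{N+1}V_N^{\theta}$ at infinity does not need to \emph{vanish}\,---\,it is nonpositive and can simply be dropped; and the optimization over $r$ lands at $r=1/p$, giving $C_H\le p^{1/p}q^{1/q}A$, which is at most $2A$ (the maximum of $p^{1/p}q^{1/q}$ over conjugate pairs is $2$, at $p=q=2$), so the stated bound $C_H\le 4A$ follows with room to spare and there is no delicate constant-tracking left to do.
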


Thus, we use the characterization for the validity of the weighted discrete Hardy inequality to determine the exponents $\beta$ and $\alpha$ for which the previos weights satisfy the sufficient condition in Theorem \ref{Main}: 
\begin{equation*}
\sum_{j=1}^\infty u_i \left(\sum_{i=1}^j d_i \right)^2\leq C_H^2\sum_{j=1}^\infty u_i d_j^2,
\end{equation*}
where 
\[u_i:=|\Omega_i|\,\omega^2(2^{-i}).\] 

Let us start by calculating the measure of the subdomains $\Omega_i$:

\begin{align*}
|\Omega_{i}| &= \int^{2^{-i}}_{2^{-\left(i+2\right)}}\int^{x_1^{\gamma}}_{0}\dx_2\dx_1 = \int^{2^{-i}}_{2^{-\left(i + 2\right)}}x_1^{\gamma}\dx_1\\ 
                     &= \Eval{\frac{x_1^{\gamma + 1}}{\gamma + 1}}{2^{-\left(i + 2\right)}}{2^{-i}}\\ 
                     &= \frac{1}{\gamma + 1} \left({2^{-i(\gamma + 1)}}-2^{-(i+2)(\gamma + 1)}\right)\\
                     &= \frac{1 - 2^{-2(\gamma + 1)}}{\gamma + 1} 2^{-i(\gamma + 1)}\\
                     &=C_\gamma 2^{-(\gamma + 1)i},
\end{align*}
where 
\begin{equation}\label{notation}
C_\gamma = \dfrac{1 - 2^{-2(\gamma + 1)}}{\gamma + 1}. 
\end{equation}

For simplicity, we include some basic calculations on geometric sums which will be used in the following proofs:
\begin{align*}
\sum^{\infty}_{i = k}r^{i}  &= \dfrac{r^{k}}{1 - r}, \text{ for }0<r<1,\\ 
\sum^{k}_{i = 1}r^{i(1 - q)}&= \dfrac{(r^{1-q})^{k + 1} - r^{1-q}}{r^{1-q} - 1}, \text{ for }r>0,\text{ and }q>1.
\end{align*}

The following lemma considers the power weights $\omega(x_1)=x_1^\beta$. 
\begin{lemma}\label{Selena Van 1} Let $\O\subset\R^2$ be the domain defined in \eqref{domain} and $1<p,q<\infty$, with $\frac{1}{p}+\frac{1}{q}=1$. Then, the weight $\omega:\O\to\R$ defined by $\omega(x)=x_1^\beta$, with $\beta>\frac{-\gamma-1}{p}$, is an admissible weight in the sense of Definition \ref{admissible weights}, and satisfies weighted discrete Hardy inequality \eqref{dHardy 2} where $\omega_i:=\omega(2^{-i})$. 

Moreover, 
\begin{equation*}
C_H<4\left(\frac{1}{r(1-r)}\right)^{1/p}\left(\frac{1}{r^{1-q}-1}\right)^{1/q},
\end{equation*}
where 
\begin{equation*}
r:=2^{-p\beta-\gamma-1}.
\end{equation*}
\end{lemma}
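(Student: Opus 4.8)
The plan is to establish the two claims separately: that $\omega(x)=x_1^\beta$ is admissible, and that it satisfies \eqref{dHardy 2} together with the stated control on $C_H$, the latter obtained by feeding the explicit sequence weights into the characterization of Theorem \ref{Characterization}.

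For admissibility in the sense of Definition \ref{admissible weights}, I would first verify $\omega^p\in L^1(\Omega)$. Integrating $x_1^{p\beta}$ over $\Omega$ and performing the inner integration in $x_2$ reduces the question to the convergence of $\int_0^1 x_1^{p\beta+\gamma}\dx_1$, which holds precisely when $p\beta+\gamma>-1$, i.e. $\beta>\frac{-\gamma-1}{p}$ — exactly the standing hypothesis. The oscillation estimate \eqref{admissible} is then immediate from the monotonicity of $t\mapsto t^\beta$: on $\Omega_i$ the first coordinate ranges over $(2^{-(i+2)},2^{-i})$, so the ratio of the essential supremum to the essential infimum of $x_1^\beta$ equals $2^{2|\beta|}$, independently of $i$, and one may take $C_\omega=2^{2|\beta|}$.

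For the Hardy inequality the crucial observation is that \eqref{dHardy 2} is \eqref{wHardy} raised to the power $p$ with $u_i=v_i=|\Omega_i|\,\omega_i^p$, so Theorem \ref{Characterization} applies. Inserting $|\Omega_i|=C_\gamma 2^{-(\gamma+1)i}$ (with $C_\gamma$ as in \eqref{notation}) and $\omega_i^p=2^{-ip\beta}$ collapses both weights into a single geometric sequence $u_i=v_i=C_\gamma r^i$, where $r=2^{-p\beta-\gamma-1}$; note that $0<r<1$ is equivalent to the admissibility range $\beta>\frac{-\gamma-1}{p}$, which is exactly what makes the relevant series converge. I would then evaluate the two sums defining $A$ using the geometric-sum formulas recorded before the lemma: $\sum_{i=k}^\infty u_i=C_\gamma r^k/(1-r)$ and, writing $s:=r^{1-q}>1$, $\sum_{i=1}^k v_i^{1-q}=C_\gamma^{1-q}\,s(s^k-1)/(s-1)$.

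Assembling the product $A_k$ for the supremand, two cancellations forced by $\frac1p+\frac1q=1$ carry the argument: the constant prefactor is $C_\gamma^{1/p+(1-q)/q}=C_\gamma^0=1$, and the $k$-dependent powers combine as $r^{k/p}s^{k/q}=r^{k(1/p+(1-q)/q)}=1$, leaving $A_k=(1-r)^{-1/p}\,s^{1/q}(s-1)^{-1/q}(1-s^{-k})^{1/q}$. Since $(1-s^{-k})^{1/q}$ increases to $1$, the supremum equals $A=(1-r)^{-1/p}s^{1/q}(s-1)^{-1/q}$; simplifying $s^{1/q}=r^{(1-q)/q}=r^{-1/p}$ turns this into $\left(\tfrac{1}{r(1-r)}\right)^{1/p}\left(\tfrac{1}{r^{1-q}-1}\right)^{1/q}$. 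As $A<\infty$, Theorem \ref{Characterization} yields the validity of \eqref{dHardy 2}, and $C_H\le 4A$ then gives the stated bound. The only genuine obstacle is the exponent bookkeeping: the whole estimate hinges on the two conjugacy-driven cancellations above, and once they are identified the closed form for $A$ is mechanical.
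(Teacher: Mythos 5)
Your proposal is correct and follows essentially the same route as the paper: verify $\omega^p\in L^1(\Omega)$, check the oscillation condition \eqref{admissible} by monotonicity, and then apply Theorem \ref{Characterization} after collapsing $|\Omega_i|\,\omega_i^p$ into the geometric sequence $C_\gamma r^i$, with the two conjugate-exponent cancellations doing the work. The only (immaterial) differences are that you evaluate the supremum exactly as $A=\left(\frac{1}{r(1-r)}\right)^{1/p}\left(\frac{1}{r^{1-q}-1}\right)^{1/q}$ whereas the paper bounds each supremand strictly from above, and you use the sharper admissibility constant $C_\omega=2^{2|\beta|}$ in place of the paper's $2^{2|\beta_0|}$.
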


\begin{proof}
First, let us show that $x_1^{p\beta}\in L^1(\Omega)$:

\begin{equation*}
\int^{1}_{0}\int^{x_1^{\gamma}}_{0}x_1^{\beta p}\dx_2\dx_1= \int^{1}_{0}x_1^{\beta p + \gamma}dx_1,
\end{equation*}
which is finite if and only if $\beta p + \gamma > -1$, equivalently, $\beta>\frac{-\gamma-1}{p}$. Moreover, it is easy to prove that Condition \eqref{admissible} is valid with $C_\omega=2^{2|\beta_0|}$, where $\beta_0:=\frac{-\gamma-1}{p}$.

Now, we have to show that the weighted discrete Hardy inequality \eqref{dHardy 2} is satisfied for the sequence weight $\omega_i:=2^{-i\beta}$, with $\beta>\frac{-\gamma-1}{p}$. Thus, by Theorem \ref{Characterization}, it is necessary and sufficient to show that 
\begin{equation*}
A=\sup_{k\geq 1} \left(\sum_{i=k}^\infty |\Omega_i|2^{-i\beta p} \right)^{1/p} \left(\sum_{i=1}^k (|\Omega_i|2^{-i\beta p})^{1-q} \right)^{1/q} <\infty.
\end{equation*}

Hence, let us denote 
\begin{equation*}
|\Omega_i| 2^{-i\beta p}= C_\gamma \left(2^{-(\gamma+1)-p\beta}\right)^i=:C_\gamma r^i,
\end{equation*}
where $C_\gamma$ was introduced in \eqref{notation}. Notice that $r\in (0,1)$. Thus,

\begin{align*} A=&\sup_{k \geq 1}\left (\sum^{\infty}_{i = k}C_\gamma r^i\right)^{1/p}\left(\sum^{k}_{i = 1}(C_\gamma r^i)^{(1 - q)}\right)^{1/q}\\
=&C_\gamma^{1/p+(1-q)/q}\sup_{k \geq 1}\left (\sum^{\infty}_{i = k} r^i\right)^{1/p}\left(\sum^{k}_{i = 1}r^{i(1 - q)}\right)^{1/q}\\
=&\sup_{k \geq 1}\left(\dfrac{r^k}{1 - r}\right)^{1/p}\left(\dfrac{(r^{1-q})^{k + 1} - r^{1-q}}{r^{1-q} - 1}\right)^{1/q}\\
=&\left(\frac{1}{1-r}\right)^{1/p}\left(\frac{r^{1-q}}{r^{1-q}-1}\right)^{1/q}\sup_{k \geq 1} r^{k/p} \left(r^{k(1-q))}-1\right)^{1/q} \\
<&\left(\frac{1}{1-r}\right)^{1/p}\left(\frac{r^{1-q}}{r^{1-q}-1}\right)^{1/q}\sup_{k \geq 1} r^{k/p} r^{k(1-q)/q}\\
=&\left(\frac{1}{1-r}\right)^{1/p}\left(\frac{r^{1-q}}{r^{1-q}-1}\right)^{1/q}<\infty.
\end{align*}
Moreover, using again Theorem \ref{Characterization}, it follows that 
\begin{equation*}
C_H\leq 4A<4\left(\frac{1}{r(1-r)}\right)^{1/p}\left(\frac{1}{r^{1-q}-1}\right)^{1/q},
\end{equation*}
where 
\begin{equation*}
r:=2^{-p\beta-\gamma-1}.
\end{equation*}
\end{proof}

\begin{proof}[Proof of Corollary \ref{Corollary 1}] It follows from Theorem \ref{Main} and Lemma \ref{Selena Van 1}.
\end{proof}

\begin{lemma}\label{Selena Van 3} Let $\O\subset\R^2$ be the domain defined in \eqref{domain} and $1<p,q<\infty$, with $\frac{1}{p}+\frac{1}{q}=1$. Then, the weight $\omega:\O\to\R$ defined by $\omega(x)=(1-\ln(x_1))^\alpha$, with $\alpha\in\R$, is an admissible weight in the sense of Definition \ref{admissible weights}, and satisfies the weighted discrete Hardy inequality \eqref{dHardy 2} for $\omega_i:=\omega(2^{-i})$. 
\end{lemma}

\begin{proof} If $\alpha$ is zero, then $\omega(x)=1$. This weight was studied  in Lemma \ref{Selena Van 1}, for $\beta=0$, which is admissible and satisfies the discrete Hardy inequality \eqref{dHardy 2} with 
\begin{equation*}
C_H<4\left(\frac{1}{r(1-r)}\right)^{1/p}\left(\frac{1}{r^{1-q}-1}\right)^{1/q},
\end{equation*}
for 
\begin{equation*}
r:=2^{-(\gamma+1)}.
\end{equation*}

Thus, we have to consider the case when $\alpha$ is different from $0$.

First, let us show that  ${\omega^p(x)}=(1-\ln(x_1))^{p\alpha}\in L^1(\Omega)$:
\begin{align*}
&\int^{1}_{0}\int^{x_1^{\gamma}}_{0} (1-\ln(x_1))^{p\alpha}\dx_2\dx_1\\
&= \int^{1}_{0} {x_1^{\gamma}}(1 - \ln(x_1))^{p\alpha} \dx_1.
\end{align*}

If $\alpha$ is positive, then the function $f(x_1)=x_1^{\gamma}((1 - \ln(x_1))^{p\alpha}$ tends to 0 as $x_1$ tends to 0 from the right, then the integral of this continuous function is finite:
\begin{align*}
\lim_{{x_1} \to 0^+} x_1^{\gamma}(1 - \ln(x_1))^{p\alpha}&=\lim_{{x_1} \to 0^+} \left(\frac{1-\ln(x_1)}{x_1^{-\gamma/{p\alpha}}}\right)^{p\alpha}=0,
\end{align*}
since
\begin{align*}\lim_{{x_1} \to 0^+} \frac{1-\ln(x_1)}{x_1^{-\gamma/{p\alpha}}}&=\lim_{{x_1} \to 0^+} \frac{-x_1^{-1}}{(-\gamma/p\alpha)x_1^{-\gamma/{p\alpha-1}}}
&=\lim_{{x_1} \to 0^+} \frac{p\alpha}{\gamma} x_1^{\gamma/p\alpha}=0.
\end{align*}

If $\alpha$ is negative then $0<x_1^{\gamma}(1 - \ln(x_1))^{p\alpha}<1$, thus ${\omega^p(x)}\in L^1(\Omega)$. 

Now, let us estimate the constant $C_\omega$ in inequality \eqref{admissible}: 
\begin{equation*}
\sup_{x\in \Omega_i} \omega(x)\leq C_\omega \inf_{x\in \Omega_i} \omega(x).
\end{equation*}

If $\alpha$ is positive, then $\omega(x)$ is decreasing with respect to $x_1$, then 
\begin{align*}
\sup_{x\in \Omega_i}\omega(x)&= \text{ } \omega(2^{-i-2})=(1+(i+2)\ln(2))^\alpha\\
\inf_{x\in \Omega_i}\omega(x)&= \text{ } \omega(2^{-i})=(1+i\ln(2))^\alpha,
\end{align*}
hence, 
\begin{align*}
\frac{\omega(2^{-i-2})}{\omega(2^{-i})}&=\left(1+\frac{2\ln(2)}{1+i\ln(2)}\right)^\alpha\leq \left(1+ 2\ln(2)\right)^\alpha.
\end{align*}

If $\alpha$ is negative, then $\omega(x)$ is increasing with respect to $x_1$, then 
\begin{align*}
\sup_{x\in \Omega_i}\omega(x)&= \text{ } \omega(2^{-i})=(1+i\ln(2))^\alpha\\
\inf_{x\in \Omega_i}\omega(x)&= \text{ } \omega(2^{-i-2})=(1+(i+2)\ln(2))^\alpha,
\end{align*}
hence, 
\begin{align*}
\frac{\omega(2^{-i})}{\omega(2^{-i-2})}&=\left(1+\frac{2\ln(2)}{1+i\ln(2)}\right)^{-\alpha} \leq \left(1+ 2\ln(2)\right)^{-\alpha}.
\end{align*}
Thus,  $C_\omega:= \left(1+ 2\ln(2)\right)^{|\alpha|}$ satisfies estimate \eqref{admissible}.

Third, let us study the weighted discrete Hardy inequality for this weight. We use the characterization stated in Theorem \ref{Characterization} thus we have to estimate the following supremum
\begin{multline}\label{A for Log}
A= \sup_{k\geq 1}\left(\sum_{i=k}^\infty {u_i}\right)^\frac{1}{p}\left(\sum_{i=1}^k {u_i}^{1-q}\right)^\frac{1}{q}\\
=\sup_{k\geq 1}\left(\sum_{i=k}^\infty 2^{-{(\gamma+1)}i}\left(1+i\ln(2)\right)^{p\alpha}\right)^\frac{1}{p}\\
\left(\sum_{i=1}^k \left(2^{-{(\gamma+1)}i}\left(1+i\ln(2)\right)^{p\alpha}\right)^{1-q}\right)^\frac{1}{q}.
\end{multline}

If $\alpha$ is negative, then $p\alpha<0$ and $(1+i\ln(2))^{p\alpha}\leq (1+k\ln(2))^{p\alpha}$ for all $i\geq k$. Similarly, $p\alpha(1-q)>0$ and $(1+i\ln(2))^{p\alpha(1-q)}\leq (1+k\ln(2))^{p\alpha(1-q)}$ for all $i\leq k$. Thus, 
\begin{align*}
A\leq &\sup_{k\geq 1} (1+k\ln(2))^{\alpha+p\alpha(1-q)/q} \left(\sum_{i=k}^\infty r^i \right)^\frac{1}{p}\left(\sum_{i=1}^k r^{i(1-q)}\right)^\frac{1}{q}\\
=&\sup_{k\geq 1} \left(\sum_{i=k}^\infty r^i \right)^\frac{1}{p}\left(\sum_{i=1}^k r^{i(1-q)}\right)^\frac{1}{q}\\ 
\leq& \left(\frac{1}{1-r}\right)^{1/p}\left(\frac{r^{1-q}}{r^{1-q}-1}\right)^{1/q}<\infty,
\end{align*}
where $r=2^{-(\gamma+1)}$.

For $\alpha$ positive, 
we define $a=p\alpha>0$ and $f(t)=r^t (1+t\ln(2))^a$. By a straightforward calculation, it can be seen that $f$ is positive and  decreasing for $t$ sufficiently large. Thus, there exists $k_0\in\N$ such 
\begin{align*}
\sum_{i=k}^\infty 2^{-{(\gamma+1)}i}\left(1+i\ln(2)\right)^{p\alpha}\leq \int_{k-1}^\infty r^t (1+t\ln(2))^a \d t.
\end{align*}
for $k\geq k_0$. Next, by using integration by parts, we obtain
\begin{align*}
I&:= \int_{k-1}^\infty r^t (1+t\ln(2))^a \d t\\
&\leq \frac{-r^k}{r\ln(r)}(1+k\ln(2))^a+\int_{k-1}^\infty r^t (1+t\ln(2))^a \left[\frac{a\ln(2)}{-\ln(r)(1+t\ln(2))}\right]\d t.
\end{align*}
Now, we assume that $k_0$ is sufficiently large such that the function between brackets in the previous line is less than $1/2$. Thus, 
\begin{align*}
I\leq \frac{-r^k}{r\ln(r)}(1+k\ln(2))^a+\frac{1}{2}I,
\end{align*}
and
\begin{align*}
\frac{1}{2} I\leq \frac{-r^k}{r\ln(r)}(1+k\ln(2))^a.
\end{align*}
Thus, it follows that 
\begin{align}
\sum_{i=k}^\infty r^i (1+i\ln(2))^a &\leq \int_{k-1}^\infty r^t (1+t\ln(2))^a \d t=I\notag\\ 
\label{Estimate 1}& \leq \frac{-2r^k}{r\ln(r)} (1+k\ln(2))^a,
\end{align}
for $k\geq k_0$. 

Let us study the second sum in the estimation of $A$ in \eqref{A for Log}. Thus, we define 
\begin{equation}\label{tilde r}
{\tilde{r}}:= 2^{-(\gamma+1)(1-q)}= r^{1-q}= r^{\frac{-q}{p}} >1,
\end{equation} 
and 
\begin{equation}\label{tilde a}
{\tilde{a}}:= a(1-q)= {-aq}/{p} <0.
\end{equation}
Notice that the function  $g(t):={\tilde{r}}^t (1+t\ln(2))^{\tilde{a}}$ is positive and increasing for $t$ sufficiently large. Thus, there exists a constant $C_2>1$ such that 
\begin{align}\label{Estimate 2}
\sum_{i=1}^k \tilde{r}^i\left(1+i\ln(2)\right)^{\tilde{a}} \leq C_2 \int_1^{k+1} \tilde{r}^t (1+t\ln(2))^{\tilde{a}} \d t,
\end{align}
for all $k\geq 1$.
Finally, notice that to show that $A$ in \eqref{A for Log} is finite  it is sufficient to consider the case where the supremum runs over $k\geq k_0$ and estimate its power $q$. Thus, from \eqref{Estimate 1} and \eqref{Estimate 2}, we have 
\begin{multline*}
\sup_{k \geq k_0} \left(\sum_{i=k}^\infty r^i\left(1+i\ln(2)\right)^{a}\right)^{\frac{q}{p}} \left(\sum_{i=1}^k \tilde{r}^i\left(1+i\ln(2)\right)^{\tilde{a}}\right)\\
\leq C_2 \frac{\int_1^{k+1} \tilde{r}^t (1+t\ln(2))^{\tilde{a}} dt}{r^{\frac{-kq}{p}} (1+k\ln(2))^{\frac{-aq}{p}}},
\end{multline*}
for another constant $C_2$, which is independent of $k$, denoted with the same letter for simplicity.

Finally, we calculate the limit of the above quotient as $k$ goes to infinity, understanding $k$ as a continuous variable. We use for this analysis definitions \eqref{tilde r} and \eqref{tilde a}, and L'Hospital rule:
\begin{align*}
&\lim_{k \to \infty} \frac{\int_1^{k+1} \tilde{r}^t (1+t\ln(2))^{\tilde{a}} dt}{r^{\frac{-kq}{p}} (1+k\ln(2))^{\frac{-aq}{p}}}=\lim_{k \to \infty} \frac{\int_1^{k+1} \tilde{r}^t (1+t\ln(2))^{\tilde{a}} dt}{\tilde{r}^{k} (1+k\ln(2))^{\tilde{a}}}\\
=&\lim_{k \to \infty} \frac{\tilde{r}^{k+1} (1+(k+1)\ln(2))^{\tilde{a}}}{\ln(\tilde{r})\tilde{r}^{k} (1+k\ln(2))^{\tilde{a}} +\tilde{a} \ln(2) \tilde{r}^{k} (1+k\ln(2))^{\tilde{a}-1}}\\
=&\lim_{k \to \infty} \tilde{r} \left(\frac{1+(k+1)\ln(2)}{1+k\ln(2)}\right)^{\tilde{a}} \frac{1}{\ln(\tilde{r})+\frac{\tilde{a}\ln(2)}{1+k\ln(2)}}\\
=&\frac{\tilde{r}}{\ln(\tilde{r})}.
\end{align*} 

Therefore, the sequence is convergent and bounded, which implies that $A$ is finite. 
\end{proof}

\begin{proof}[Proof of Corollary \ref{Corollary 2}] It follows immediately from Theorem \ref{Main} and Lemma \ref{Selena Van 3}.
\end{proof}

\section*{Acknowledgements}
We gratefully acknowledge Professor M. Helena Noronha from Cal State Northridge for creating the PUMP program {\bf DMS-1247679} depending on NSF, which provided support to several students in the California State Universities located in Southern California to conduct research in Mathematics. We extend our acknowledgement to Professor John Rock from Cal Poly Pomona for introducing the authors to Professor M. Helena Noronha and this outstanding program.

\end{document}